\newtheorem{theorem}{Theorem}[section]
\newtheorem{corollary}[theorem]{Corollary}
\newtheorem*{theorem*}{Theorem}
\theoremstyle{definition}
\newtheorem{example}[theorem]{Example}
\newtheorem{remark}[theorem]{Remark}
\newcommand{\R}{{\mathbb R}}
\newcommand{\Z}{\mathbb Z}
\newcommand{\T}{{\mathbb T}}
\newcommand{\beq}{\begin{equation}}
\newcommand{\eeq}{\end{equation}}
\newcommand{\f}{\varphi}
\newcommand{\psip}{\psi}
\newcommand{\SU}{{\mathrm{SU}}}
\newcommand{\G}{{\mathrm G}}
\newcommand{\W}{\wedge}
\newcommand{\Diff}{\mathrm{Dif{}f}}
\newcommand{\Iso}{\mathrm{Iso}}
\DeclareMathOperator\Aut{Aut}
\newcommand{\frn}{\mathfrak{n}}
\renewcommand{\gg}{\mathfrak{g}}
\newcommand{\gn}{\mathfrak{n}}
\newcommand{\st}{\ |\ }
\newcommand{\sst}{\scriptscriptstyle}
\numberwithin{equation}{section}
\title[On the automorphism group of a closed G$_2$-structure]{On the automorphism group of a closed G$_{\mathbf2}$-structure}
\author{Fabio Podest\`a and Alberto Raffero}
\subjclass[2010]{53C10}%, 53C15, 53D05}
\keywords{closed $\G_2$-structure, automorphism}
\thanks{The authors were supported by GNSAGA of INdAM}
\address{Dipartimento di Matematica e Informatica ``U.~Dini'' \\ Universit\`a degli Studi di Firenze\\ Viale Morgagni 67/a\\ 50134 Firenze\\ Italy}
\email{podesta@math.unifi.it, alberto.raffero@unifi.it}
\begin{document}
%%%%%%%%%%%%%%%%%%%%%%%%%%%%%%%%%%%%%%%%%%%%%%%%%%%%%%%%%%%%%%%%%%%%%%%%%%%%%%%%%%%%%%%%%
%%%%%%%%%%%%%%%%%%%%%%%%%%%%%%%%%%%%%%%%%%%%%%%%%%%%%%%%%%%%%%%%%%%%%%%%%%%%%%%%%%%%%%%%%
%																ABSTRACT 
%%%%%%%%%%%%%%%%%%%%%%%%%%%%%%%%%%%%%%%%%%%%%%%%%%%%%%%%%%%%%%%%%%%%%%%%%%%%%%%%%%%%%%%%%
%%%%%%%%%%%%%%%%%%%%%%%%%%%%%%%%%%%%%%%%%%%%%%%%%%%%%%%%%%%%%%%%%%%%%%%%%%%%%%%%%%%%%%%%%
\begin{abstract}
We study the automorphism group of a compact 7-manifold $M$ endowed with a closed non-parallel $\G_2$-structure, showing that its identity component is abelian 
with dimension bounded by $\mathrm{min}\{6,b_2(M)\}$. 
This implies the non-existence of compact homogeneous manifolds endowed with an invariant closed non-parallel $\G_2$-structure. 
We also discuss some relevant examples. 
\end{abstract}

\maketitle

%%%%%%%%%%%%%%%%%%%%%%%%%%%%%%%%%%%%%%%%%%%%%%%%%%%%%%%%%%%%%%%%%%%%%%%%%%%%%%%%%%%%%%%%%
%%%%%%%%%%%%%%%%%%%%%%%%%%%%%%%%%%%%%%%%%%%%%%%%%%%%%%%%%%%%%%%%%%%%%%%%%%%%%%%%%%%%%%%%%
%																INTRODUCTION 
%%%%%%%%%%%%%%%%%%%%%%%%%%%%%%%%%%%%%%%%%%%%%%%%%%%%%%%%%%%%%%%%%%%%%%%%%%%%%%%%%%%%%%%%%
%%%%%%%%%%%%%%%%%%%%%%%%%%%%%%%%%%%%%%%%%%%%%%%%%%%%%%%%%%%%%%%%%%%%%%%%%%%%%%%%%%%%%%%%%
\section{Introduction}
A seven-dimensional smooth manifold $M$ admits a $\G_2$-structure if the structure group of its frame bundle can be reduced to 
the exceptional Lie group $\G_2\subset\mathrm{SO}(7)$.  
Such a reduction is characterized by the existence of a global 3-form $\f\in\Omega^3(M)$ satisfying a suitable non-degeneracy condition and 
giving rise to a Riemannian metric $g_\f$ and to a volume form $dV_\f$ on $M$ via the identity
\[
g_\f(X,Y)\,dV_\f = \frac{1}{6}\,\iota_X\f\W\iota_Y\f\W\f,
\]
for all $X,Y\in\mathfrak{X}(M)$ (see e.g.~\cite{Bry,Hit}). 

By \cite{FeGr}, the intrinsic torsion of a $\G_2$-structure $\f$ can be identified with the covariant derivative $\nabla^{g_\f}\f$, and it vanishes identically if and only if 
both $d\f=0$ and $d*_\f\f=0$, $*_\f$ being the Hodge operator defined by $g_\f$ and $dV_\f$. 
On a compact manifold, this last fact is equivalent to $\Delta_\f\f=0$, where $\Delta_\f = d^*d+dd^*$ is the Hodge Laplacian of $g_\f$.  
A $\G_2$-structure $\f$ satisfying any of these conditions is said to be {\em parallel} and its associated Riemannian metric $g_\f$ has holonomy contained in $\G_2$.  
Consequently, $g_\f$ is Ricci-flat and the automorphism group $\Aut(M,\f) \coloneqq \left\{f\in\Diff(M)\st f^*\f=f \right\}$ of $(M,\f)$ is finite when $M$ is compact and 
$\mathrm{Hol}(g_\f)=\G_2$. 
 
Parallel $\G_2$-structures play a central role in the construction of compact manifolds with holonomy $\G_2$, 
and known methods to achieve this result involve {\em closed} $\G_2$-structures, i.e., those whose defining 3-form $\f$ satisfies $d\f=0$ 
(see \cite{Bry,BrXu,CHNP,Joy,Kov,LoWe}).   

Most of the known examples of 7-manifolds admitting closed $\G_2$-structures consist of simply connected Lie groups endowed with a left-invariant closed $\G_2$-form 
$\f$ \cite{CoFe,Fer,Fer1,FiRa,Lau}. Compact locally homogeneous examples can be obtained considering the quotient of such groups by a 
co-compact discrete subgroup, whenever this exists. 
Further non-homogeneous closed $\G_2$-structures on the 7-torus can be constructed starting from the symplectic half-flat $\SU(3)$-structure on $\mathbb{T}^6$ 
described in \cite[Ex.~5.1]{DeTo} (see Example \ref{ExTorus} for details). 

Up to now, the existence of compact homogeneous 7-manifolds admitting an invariant closed non-parallel $\G_2$-structure is still not known 
(cf.~\cite[Question 3.1]{Lau} and \cite{LeMu,Rei}). 
Moreover, among the $\G_2$-manifolds acted on by a cohomogeneity one simple group of automorphisms studied in \cite{ClSw} no compact examples admitting a closed 
$\G_2$-structure occur.  

In this short note, we investigate the properties of the automorphism group $\Aut(M,\f)$ of a compact 7-manifold $M$ endowed with a closed non-parallel $\G_2$-structure $\f$. 
Our main results are contained in Theorem \ref{MainThm}, where we show that the identity component $\Aut(M,\f)^{\sst0}$ is necessarily abelian 
with dimension bounded by $\mathrm{min}\{6,b_2(M)\}$. 
In particular, this answers negatively \cite[Question 3.1]{Lau} and explains why compact examples cannot occur in \cite{ClSw}. 
Moreover, we also prove some interesting properties of the automorphism group action. 
Finally, we describe some relevant examples.  

Similar results hold for compact symplectic half-flat 6-manifolds, and they will appear in a forthcoming paper. 

%%%%%%%%%%%%%%%%%%%%%%%%%%%%%%%%%%%%%%%%%%%%%%%%%%%%%%%%%%%%%%%%%%%%%%%%%%%%%%%%%%%%%%%%%
%%%%%%%%%%%%%%%%%%%%%%%%%%%%%%%%%%%%%%%%%%%%%%%%%%%%%%%%%%%%%%%%%%%%%%%%%%%%%%%%%%%%%%%%%
%																MAIN THEOREM 
%%%%%%%%%%%%%%%%%%%%%%%%%%%%%%%%%%%%%%%%%%%%%%%%%%%%%%%%%%%%%%%%%%%%%%%%%%%%%%%%%%%%%%%%%
%%%%%%%%%%%%%%%%%%%%%%%%%%%%%%%%%%%%%%%%%%%%%%%%%%%%%%%%%%%%%%%%%%%%%%%%%%%%%%%%%%%%%%%%%
\section{The automorphism group}
Let $M$ be a seven-dimensional manifold endowed with a closed $\G_2$-structure $\f$, and consider its automorphism group
\[
\Aut(M,\f) \coloneqq \left\{f\in\Diff(M)\st f^*\f=f \right\}.
\]
Notice that $\Aut(M,\f)$ is a closed Lie subgroup of $\Iso(M,g_\f)$, and that the Lie algebra of its identity component $\G \coloneqq \Aut(M,\f)^{\sst0}$ is 
\[
\gg=\left\{X\in\mathfrak{X}(M) \st \mathcal{L}_X\f=0 \right\}.
\]
In particular, every $X\in\gg$ is a Killing vector field for the metric $g_\f$ (cf.~\cite[Lemma 9.3]{LoWe}). 

When $M$ is compact,  the Lie group $\Aut(M,\f)\subset\Iso(M,g_\f)$ is also compact, and we can show the following. 
\begin{theorem}\label{MainThm}
Let $M$ be a compact seven-dimensional manifold endowed with a closed non-parallel $\G_2$-structure $\f$. 
Then, there exists an injective map
\[
F:\gg\rightarrow\mathscr{H}^2(M),\quad X\mapsto \iota_X\f,
\]
where $\mathscr{H}^2(M)$ is the space of $\Delta_\f$-harmonic 2-forms.  
As a consequence, the following properties hold:
\begin{enumerate}[1)]
\item\label{thm1} $\dim(\gg)\leq b_2(M)$;
\item\label{thm2} $\gg$ is abelian with $\dim(\gg)\leq6$;
\item\label{thm3} for every $p\in M,$ the isotropy subalgebra $\gg_p$ has dimension $\dim(\gg_p)\leq2$, with equality only when $\dim(\gg)=2,3$;
\item\label{thm4} the $\G$-action is free when $\dim(\gg)\geq5$. 
\end{enumerate}
\end{theorem}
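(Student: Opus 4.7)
The plan is to first verify that $F$ is well-defined and injective, then derive the four consequences in order. For $X\in\gg$, Cartan's formula together with $\mathcal{L}_X\f=0$ and $d\f=0$ immediately yields $d\iota_X\f=0$. To see that $\iota_X\f$ is also coclosed, observe that $\iota_X\f$ belongs to the $7$-dimensional component $\Omega^2_7$ of the type decomposition $\Omega^2=\Omega^2_7\oplus\Omega^2_{14}$, which gives the pointwise identity $\iota_X\f\wedge\f=2\,*_\f(\iota_X\f)$. Differentiating both sides kills the left-hand side (by $d\iota_X\f=0$ and $d\f=0$), hence $d*_\f(\iota_X\f)=0$, i.e.\ $d^*(\iota_X\f)=0$, so $\iota_X\f\in\mathscr{H}^2(M)$. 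For injectivity, if $\iota_X\f=d\alpha$ is exact, then the defining identity $g_\f(X,X)\,dV_\f=\tfrac{1}{6}\iota_X\f\wedge\iota_X\f\wedge\f$ combined with $d\f=0$ and Stokes gives
\[
\int_M |X|_\f^2\,dV_\f = \tfrac{1}{6}\int_M d(\alpha\wedge d\alpha\wedge\f)=0,
\]
forcing $X=0$.

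Consequence (\ref{thm1}) is immediate. For (\ref{thm2}), the computation $\iota_{[X,Y]}\f=\mathcal{L}_X(\iota_Y\f)=d(\iota_X\iota_Y\f)$ exhibits $\iota_{[X,Y]}\f$ as exact, so $[X,Y]=0$ by injectivity and $\gg$ is abelian. To rule out $\dim\gg=7$, the connected compact abelian group $\G$ would be a $7$-torus acting on the compact $7$-manifold $M$; after possibly quotienting by a finite ineffective kernel, the principal isotropy is trivial and the principal orbit is both open and compact, hence equal to $M$, so $M\cong T^7$. Then $\f$ is left-invariant, and the induced left-invariant metric on the abelian group $T^7$ is flat, forcing $\f$ to be parallel---contradicting the hypothesis.

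For (\ref{thm3}), the isotropy $\G_p$ embeds into the stabilizer of $\f_p$ inside $\SO(T_pM)$, namely $\G_2$. Elements of $\gg_p$ linearize at $p$ to skew-symmetric (hence semisimple) elements of $\mathfrak{g}_2\subset\mathfrak{so}(7)$; being also mutually commuting, they lie in a common Cartan subalgebra of $\mathfrak{g}_2$, so $\dim\gg_p\le 2$. If $\dim\gg_p=2$, then $\gg_p$ spans a maximal torus $T^2\subset\G_2$; since $\gg$ is abelian the isotropy representation of $\gg_p$ on $\gg/\gg_p\cong T_p(\G\cdot p)$ is trivial, so $T_p(\G\cdot p)$ lies in the fixed subspace of $T^2$ on $\R^7$, which is $1$-dimensional, giving $\dim\gg\le\dim\gg_p+1=3$. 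For (\ref{thm4}), if $\dim\gg\ge 5$ and some $\gg_p\ne 0$, then $\dim\gg_p=1$ by the preceding bound; its generator linearizes to a nonzero semisimple element of $\mathfrak{g}_2$, whose fixed subspace in $\R^7$ has dimension $1$ or $3$, and the same argument forces $\dim\gg\le 4$, a contradiction. The main delicate point is the coclosedness step for $F$, which crucially uses both $d\f=0$ and the $\Omega^2_7$-identity; the remaining parts reduce to linear-algebraic bookkeeping with the structure of $\G_2$.
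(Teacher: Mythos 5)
Your proposal is correct, and while parts \ref{thm3}), \ref{thm4}) and the exclusion of $\dim(\gg)=7$ follow essentially the same route as the paper (isotropy representation landing in $\G_2$, rank--two and weight--space considerations, flatness of an invariant metric on $\mathbb{T}^7$), your treatment of injectivity and of the abelian property is genuinely different and worth comparing. The paper proves that $\gg$ is abelian by invoking the classical fact that a Killing field on a compact manifold preserves every harmonic form, so that $0=\mathcal{L}_Y(\iota_X\f)=\iota_{[Y,X]}\f$, and then uses the pointwise injectivity of $v\mapsto\iota_v\f$ (non-degeneracy of $\f$). You instead observe that $\iota_{[X,Y]}\f=\mathcal{L}_X(\iota_Y\f)=d(\iota_X\iota_Y\f)$ is \emph{exact}, and you prove the stronger statement that $\iota_Z\f$ exact forces $Z=0$, via
\[
\int_M |Z|_\f^2\,dV_\f=\tfrac16\int_M \iota_Z\f\wedge\iota_Z\f\wedge\f=\tfrac16\int_M d\bigl(\beta\wedge d\beta\wedge\f\bigr)=0
\]
for $\iota_Z\f=d\beta$. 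This is a self-contained Stokes-plus-positivity argument that bypasses both the Bochner-type theorem on Killing fields and the pointwise non-degeneracy of $\f$; it still uses compactness and closedness of $\f$ in an essential way, so nothing is lost in generality, and it arguably makes the mechanism (closedness of $\f$ turning Lie-algebraic obstructions into exact forms) more transparent. Two cosmetic remarks: the sign in $\iota_X\f\wedge\f=\pm 2\,*_\f(\iota_X\f)$ is a matter of orientation convention (the paper has $-2$) and is immaterial since only non-vanishing of the coefficient is used; and the phrase ``after possibly quotienting by a finite ineffective kernel'' is unnecessary, since $\G=\Aut(M,\f)^{\sst 0}$ acts effectively by definition.
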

\begin{proof}
Let $X\in\gg$. Then, $0=\mathcal{L}_X\f = d(\iota_X\f)$, as $\f$ is closed. We claim that $\iota_X\f$ is co-closed (see also \cite[Lemma 9.3]{LoWe}). 
Indeed, by \cite[Prop.~A.3]{Kar} we have
\[
\iota_X\f\W\f = -2*_\f(\iota_X\f),
\]
from which it follows that 
\[
0 = d(\iota_X\f\W\f) =  -2d*_\f(\iota_X\f).
\]
Consequently, the 2-form $\iota_X\f$ is $\Delta_\f$-harmonic and $F$ is the restriction of the injective map $Z\mapsto\iota_Z\f$ 
to $\gg$. From this \ref{thm1}) follows. 

As for \ref{thm2}), we begin observing that $\mathcal{L}_Y(\iota_X\f)=0$ for all $X,Y\in\gg$, since every Killing field on a compact manifold preserves every harmonic form.  
Hence, we have
\[
0 = \mathcal{L}_Y(\iota_X\f) = \iota_{[Y,X]}\f + \iota_X(\mathcal{L}_Y\f) =  \iota_{[Y,X]}\f.
\]
This proves that $\gg$ is abelian, the map $Z\mapsto \iota_Z\f$ being injective. 
Now, $\G$ is compact abelian and it acts effectively on the compact manifold $M.$ Therefore, the principal isotropy is trivial and $\dim(\gg)\leq7$.  
When $\dim(\gg) = 7$, $M$ can be identified with the 7-torus $\mathbb{T}^7$ endowed with a left-invariant metric, which is automatically flat. 
Hence, if $\f$ is closed non-parallel, then $\dim(\gg)\leq6$. 

In order to prove \ref{thm3}), we fix a point $p$ of $M$ and we observe that the image of the isotropy representation $\rho:\G_p\rightarrow \mathrm{O}(7)$
is conjugated into $\G_2$. Since $\G_2$ has rank two and $\G_p$ is abelian, the dimension of $\gg_p$ is at most two. 
If $\dim(\gg_p)=2$, then the image of $\rho$ is conjugate to a maximal torus of $\G_2$ and its fixed point set in $T_pM$ is one-dimensional. 
As $T_p(\G\cdot p)\subseteq (T_pM)^{\G_p}$, the dimension of the orbit $\G\cdot p$ is at most one, which implies that $\dim(\gg)$ is either two or three. 

The last assertion \ref{thm4}) is equivalent to proving that $\G_p$ is trivial for every $p\in M$ whenever $\dim(\gg)\geq5$. 
In this case, $\dim(\gg_p)\leq1$ by \ref{thm3}). Assuming that the dimension is precisely one, then the dimension of the orbit $\G\cdot p$ is at least four. 
This means that the $\G_p$-fixed point set in $T_pM$ is at least four-dimensional. 
On the other hand, the fixed point set of a closed one-parameter subgroup of $\G_2$ is at most three-dimensional. This gives a contradiction. 
\end{proof}

The following corollary answers negatively a question posed by Lauret in \cite{Lau}. 
\begin{corollary}
There are no compact homogeneous 7-manifolds endowed with an invariant closed non-parallel $\G_2$-structure. 
\end{corollary}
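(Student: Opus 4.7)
My plan is to derive the corollary directly from part \ref{thm2}) of Theorem \ref{MainThm}, which bounds $\dim(\gg)$ by $6$.

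I would argue by contradiction. Suppose $M$ is a compact $7$-manifold carrying an invariant closed non-parallel $\G_2$-structure $\f$. The assumption of invariance means that some Lie group acts transitively on $M$ and sits inside $\Aut(M,\f)$; in particular, $\Aut(M,\f)$ itself acts transitively on $M$. Since $M$ is connected, a standard argument shows that the identity component $\G = \Aut(M,\f)^{\sst0}$ also acts transitively on $M$. Consequently $\dim(\gg) \geq \dim(M) = 7$, contradicting the bound $\dim(\gg)\leq 6$ given by Theorem \ref{MainThm} \ref{thm2}).

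There is essentially no real obstacle to this argument, since all the substance is packaged into Theorem \ref{MainThm}. The only subtlety is the passage from transitivity of $\Aut(M,\f)$ to transitivity of its identity component, which is the standard observation that the $\G$-orbits are open in the single $\Aut(M,\f)$-orbit $M$ and therefore, by connectedness of $M$, must coincide with $M$. The non-parallel hypothesis is crucial here: it is exactly what rules out the flat $7$-torus, on which an invariant parallel $\G_2$-structure is preserved by the full $7$-dimensional translation group.
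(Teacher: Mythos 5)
Your proposal is correct and follows the paper's own route: the paper likewise deduces the corollary immediately from part \ref{thm2}) of Theorem \ref{MainThm}, the point being that a transitive action forces $\dim(\gg)\geq 7$, contradicting the bound $\dim(\gg)\leq 6$. Your elaboration of the passage from transitivity of $\Aut(M,\f)$ to transitivity of its identity component is a standard detail the paper leaves implicit.
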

\begin{proof}
The assertion follows immediately from point \ref{thm2}) of Theorem \ref{MainThm}. 
\end{proof}

In contrast to the last result, it is possible to exhibit non-compact homogeneous examples. 
Consider for instance a six-dimensional non-compact homogeneous space $\mathrm{H}/\mathrm{K}$ endowed with an invariant symplectic half-flat $\SU(3)$-structure, 
namely an $\SU(3)$-structure $(\omega,\psip)$ such that $d\omega=0$ and $d\psip=0$ 
(see \cite{PoRa} for the classification of such spaces when $\mathrm{H}$ is semisimple and for more information on symplectic half-flat structures). 
If $(\omega,\psip)$ is not torsion-free, i.e., if $d(J\psip)\neq0$, 
then the non-compact homogeneous space $(\mathrm{H}\times\mathbb{S}^1)/\mathrm{K}$ admits an invariant closed non-parallel $\G_2$-structure defined by the 3-form 
\[
\f \coloneqq \omega\W ds+\psip,
\]
where $ds$ denotes the global 1-form on $\mathbb{S}^1$. 

\begin{remark}
In \cite{ClSw}, the authors investigated $\G_2$-manifolds acted on by a cohomogeneity one simple group of automorphisms. 
Theorem \ref{MainThm} explains why compact examples in the case of closed non-parallel $\G_2$-structures do not occur. 
\end{remark}

The next example shows that $\G$ can be non-trivial, that the upper bound on its dimension given in \ref{thm2}) can be attained, 
and that \ref{thm4}) is only a sufficient condition. 
\begin{example}\label{ExTorus}
In \cite{DeTo}, the authors constructed a symplectic half-flat $\SU(3)$-structure $(\omega,\psi)$ on the 6-torus $\mathbb{T}^6$ as follows. 
Let $(x^1,\ldots,x^6)$ be the standard coordinates on $\R^6$, and let $a(x^1)$, $b(x^2)$ and $c(x^3)$ be three smooth functions on $\R^6$ such that 
\[
\lambda_1 \coloneqq b(x^2)-c(x^3),\quad \lambda_2 \coloneqq c(x^3)-a(x^1),\quad \lambda_3 \coloneqq a(x^1)-b(x^2),
\]
are $\Z^6$-periodic and non-constant. 
Then, the following pair of $\Z^6$-invariant differential forms on $\R^6$ induces an $\SU(3)$-structure on $\T^6 = \R^6/\Z^6$:
\begin{eqnarray*}
\omega 	&=& dx^{14}+dx^{25}+dx^{36},\\
\psip 	&=& -e^{\lambda_3}\,dx^{126} +e^{\lambda_2}\,dx^{135}  -e^{\lambda_1}\,dx^{234} +dx^{456},
\end{eqnarray*}
where $dx^{ijk\cdots}$ is a shorthand for the wedge product $dx^i\W dx^j \W dx^k \W \cdots$. 
It is immediate to check that both $\omega$ and $\psip$ are closed and that $d(J\psip)\neq0$ whenever at least one of the functions $a(x^1)$, $b(x^2)$, $c(x^3)$ is not identically zero.  
Thus, the pair $(\omega,\psip)$ defines a symplectic half-flat $\SU(3)$-structure on the 6-torus. 
The automorphism group of $(\mathbb{T}^6,\omega,\psip)$ is $\mathbb{T}^3$ when $a(x^1)\,b(x^2)\, c(x^3) \not\equiv0$, 
while it becomes $\mathbb{T}^4$  ($\mathbb{T}^5$) when one (two) of them vanishes identically.  

Now, we can consider the closed $\G_2$-structure on $\mathbb{T}^7=\mathbb{T}^6\times\mathbb{S}^1$ defined by the 3-form $\f = \omega\W ds +\psi$. 
Depending on the vanishing of none, one or two of the functions $a(x^1)$, $b(x^2)$, $c(x^3)$, $\f$ is a closed non-parallel $\G_2$-structure and 
the automorphism group of $(\mathbb{T}^7,\f)$ is $\mathbb{T}^{4}$, $\mathbb{T}^{5}$ or $\mathbb{T}^{6}$, respectively. 
\end{example}

Finally, we observe that there exist examples where the upper bound on the dimension of $\gg$ given in \ref{thm1}) is more restrictive than the upper bound given in \ref{thm2}). 

\begin{example}
In \cite{CoFe}, the authors obtained the classification of seven-dimensional nilpotent Lie algebras admitting closed $\G_2$-structures. 
An inspection of all possible cases shows that the Lie algebras whose second Betti number is lower than seven are those appearing in Table \ref{closedG2Lieb2}. 
\begin{table}[ht]
\centering
\renewcommand\arraystretch{1.4}
\begin{tabular}{|c|c|}
\hline
nilpotent Lie algebra $\gn$													& 	$b_2(\gn)$	\\ \hline \hline
$(0, 0, e^{12}, e^{13}, e^{23}, e^{15} + e^{24}, e^{16} + e^{34})$						&	$3$			\\ \hline 
$(0, 0, e^{12}, e^{13}, e^{23}, e^{15} + e^{24}, e^{16} + e^{34} + e^{25})$					&	$3$			\\ \hline    
$(0, 0, e^{12}, 0, e^{13} + e^{24}, e^{14}, e^{46} + e^{34} + e^{15} + e^{23})$				&	$5$			\\ \hline    
$(0, 0, e^{12}, 0, e^{13}, e^{24} + e^{23}, e^{25} + e^{34} + e^{15} + e^{16} - 3 e^{26})$		&	$6$			\\ \hline    
\end{tabular}
\vspace{0.1cm}
\caption{}\label{closedG2Lieb2}
\end{table}
\renewcommand\arraystretch{1}

Let $\gn$ be one of the Lie algebras in Table \ref{closedG2Lieb2}, and consider a closed non-parallel $\G_2$-structure $\f$ on it. 
Then, left multiplication extends $\f$ to a left-invariant $\G_2$-structure of the same type on the simply connected nilpotent Lie group $\mathrm{N}$ corresponding to $\frn$. 
Moreover, as the structure constants of $\frn$ are integers, there exists a co-compact discrete subgroup $\Gamma\subset \mathrm{N}$ giving rise to a compact nilmanifold 
$\Gamma\backslash \mathrm{N}$ \cite{Mal}. 
The left-invariant 3-form $\f$ on $\mathrm{N}$ passes to the quotient defining an invariant closed non-parallel $\G_2$-structure on $\Gamma\backslash \mathrm{N}$. 
By Nomizu Theorem \cite{Nom}, the de Rham cohomology group $\mathrm{H}^k_{\mathrm{dR}}(\Gamma\backslash \mathrm{N})$ is isomorphic to the 
cohomology group $\mathrm{H}^k(\gn^*)$ of the Chevalley-Eilenberg complex of $\frn$. Hence, $b_2(\Gamma\backslash \mathrm{N})=b_2(\gn)$.
\end{example}

\newpage

\end{document}